\documentclass[12pt]{article}
\usepackage{amsmath}
\usepackage{amsfonts}
\usepackage{amssymb}
\usepackage{amsthm}
\usepackage{stackrel}

\usepackage{color}

\usepackage[cp1251]{inputenc}

\usepackage{figsize}

\usepackage{graphics}

\newtheorem{ex}{Example}[section]

\newtheorem{df}{Definition}[section]
\newtheorem{thm}{Theorem}[section]

\newcommand{\R}{{\rm I}\kern-0.18em{\rm R}}
\newcommand{\1}{{\rm 1}\kern-0.25em{\rm I}}
\newcommand{\E}{{\rm I}\kern-0.18em{\rm E}}
\newcommand{\p}{{\rm I}\kern-0.18em{\rm P}}

\makeatletter

\def\@fnsymbol#1{\ensuremath{\ifcase#1\or a\or b\or c\or d\or \e\or f\or *\dagger 	\or \ddagger\ddagger \else\@ctrerr\fi}}

\title{Outliers and The Ostensibly Heavy Tails}
\author{Lev B. Klebanov\footnote{Department of Probability and Mathematical Statistics, Charles University, Prague, Czech Republic, e-mail: lev.klebanov@mff.cuni.cz} and Irina Volchenkova\footnote{Czech Technical University in Prague, Prague, Czech Republic}}
\date{}

\begin{document}
\maketitle

\begin{abstract}
The aim of the paper is to show that the presence of one possible type of outliers is not connected to that of heavy tails of the distribution. In contrary, typical situation for outliers appearance is the case of compact supported distributions.

\vspace{0.3cm}
\noindent
{\bf Key words}: outliers; heavy tailed distributions; compact support 
\end{abstract}

%%%%%%%%%%%%%%%%%%%%%%%%%%%%%%%%%%%%%%%%

\section{Introduction and preliminary considerations}\label{s1}
\setcounter{equation}{0}
In this paper we revise the concept of the {\textit {\textbf {outliers}}}, or, more precisely, outliers of the first type (in terminology of \cite{KAKK}). We found the contemporary notion rather vague, 
which motivates us to carefully dispute its meaning and connection with distribution tail behavior. Let us start by closely looking at the outliers definition. Usually, it is similar to that given in popular INTERNET encyclopedia Wikipedia: ``In statistics, an outlier is an observation point that is distant from other observations. An outlier may be due to variability in the measurement or it may indicate experimental error; the latter are sometimes excluded from the data." Obviously, the definition is given neither in mathematically nor statistically correct way. 
In particular, we found the description of "the point being distant" from other observations rather confusing. A little bit better seems to be a definition given on NISTA site:``An outlier is an observation that lies in abnormal distance from other values in a random sample from a population. In a sense, this definition leaves it up to the analyst (or a consensus process) to decide what will be considered abnormal. Before abnormal observations can be singled out, it is necessary to characterize normal observations." However, it has similar drawbacks. In our opinion, it is essential to specify some measurement unit of the considered distance and mainly the definition of the corresponding considered distance. 
Therefore we wish to conclude that the term outlier in such a setup is highly depended on the choice of topology and geometry of the space in which we consider our experiment. In the same manner, we found the term "experimental error" equally misleading. Say, outlier is an observation which is not connected to the particular experiment, and so this observation will not appear in the next experiment. However, the statistics is devoted to repeating the experiments, and such observations will be automatically excluded from further experiments and study. 

Now consider the  possibility that such "distant" observations remain appear\-ing in the repetitions of our experimental study. In that case,  we need to keep the observations attributed to the experiment. Therefore, it is misleading to label the observations as "errors". For example, the triggering event of occurrence of such observations can be caused by  the design of the particular experiment, i.e. the way how the experiment is designed does not capture the nature of  corresponding applied problem. As a result, some observations may appear as a natural phenomena seamlessly to the considered problem. However, there are no mathematical or statistical tools to recognize such a situation and so we are left with concluding that: {\textit{\textbf {such observations are in contradiction with mathematical model chosen to describe the practical model under study}}}. Of course, if some observations are in contradiction with one model, they may be in a good agreement with another model. And so we conclude that the notion of outliers is a model sensitive, i.e. the outlier needs to be associated with the concrete mathematical or statistical model. Based on our initial discussion, let us give the following definition.

\begin{df}\label{de1}
Consider a mathematical model of some real phenomena experiment. 
We say that an observation is the outlier for this particular model if it is "in contradiction" with the model, i.e. it is either impossible to have such an observation under the assumption that the model holds, or the probability to obtain such observation for the case of true model is extremely low. If the probability is very small yet non-zero, we denote the probability as $\beta$, we will call relevant observation the $\beta$-outlier. 
\end{df}
	
Definition \ref{de1} gives precise sense to the second part of the Wikipedia definition. However, it provides no connection to the first part. In the following sections of this paper we provide the arguments and explanations that some typical cases of the outliers appearance in the statistical modeling are closely connected with the  properly defined "the distant character" of them. These "proper definitions" provide meaningful suggestions to possible model modifications in order to include the outliers as an element of the new model. Note that some ideas of the modification of outliers definitions were already considered in \cite{Kl16}.
	
\section{Definition of distant outliers}\label{s2}
\setcounter{equation}{0}
	
In this section we explore the situation when some observations are "distant" from the others. What is the "unit of measurement" for such a distance? The natural way to start is to measure the distance of the observations to their mean value in terms of their standard deviation. This approach has been used in literature especially when study financial data. Such approach seems to be interesting. However, many authors made wrong consequences from it. Namely, they concluded that the presence of such outliers is connected to heavy-tailed character of underlying distribution of observations.

Let us give one example of such conclusions.
Arguments connected to outliers usually arise when considering such time series as the Dow Jones Industrial Average index (say, for the interesting Period from the 3 January 2000 to 31 December 2009), daily ISE-100 Index (November 2, 1987 - June 8, 2001) and many others (see, e.g., \cite{EK}, \cite{BMW}). The observed fact is that quite a lot of data not only fall outside the 99\% confidence interval on the mean, but also outside the range of $\pm 5\; \sigma$ from the average, or even $\pm10\;\sigma$. On the assumption of this circumstance the authors made two conclusions.

First (and absolutely correct) conclusion consists in the fact, that the observations under assumption of their independence and identical distribution are in contradiction with their normality.

The second conclusion is that the distribution of these random variables is heavy-tailed. This decision is not based on any mathematical justification. 

It is clear that for heavy-tailed distributions with infinite variance we cannot use the standard deviation as a measure of distance. However, in statistical considerations (especially, for unknown general distribution) one can change general characteristics by their empirical counterparts. More precisely, suppose that $X_1,X_2, \ldots ,X_n$ is a sequence of independent identically distributed (i.i.d.) random variables. Denote by
\[ \bar{x}_n=\frac{1}{n}\sum_{j=1}^{n}X_j, \;\; s^2_n=\frac{1}{n}\sum_{j=1}^{n}(X_j-\bar{x})^2\]
their empirical mean and empirical variance correspondingly. Let $k>0$ be a fixed number. 
Namely, let us estimate the following probability 
\begin{equation}\label{eq3}
p_n=\p \{|X-\bar{x}_n|/s_n>k\},
\end{equation}
\begin{df}\label{de2}
We say that the distribution of $X$ produces outliers of the first kind if the probability (\ref{eq3}) is high (say, higher than for normal distribution). 
\end{df}

Really, if one has a model based on Gaussian distribution then the presence of many observations with $p_n$ greater that for normal case contradicts to the model, and the observations appears to be outliers in the sense of our Definition \ref{de1}. Such approach was used in financial mathematics to show the Gaussian distribution provides bad model for corresponding data (see, for example, \cite{EK, BMW}).
	
The observations $X_j$ for which the inequality $|X_j-\bar{x}_n|/s_n>k$ holds appear to be outliers for Gaussian model. In some financial models the presence of them was considered as an argument for the existence of heavy tails for real distributions.  Unfortunately, this is not so (see \cite{KV, KlF, KTK}).
	
\begin{thm}\label{th1}(see \cite{KTK}) Suppose that $X_1,X_2, \ldots ,X_n$ is a sequence of i.i.d. r.v.s belonging to a domain of attraction of strictly stable random variable with index of stability $\alpha \in (0,2)$. Then
\begin{equation}\label{eq4}
\lim_{n \to \infty}p_n =0.
\end{equation}
\end{thm}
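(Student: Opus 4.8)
The plan is to prove the stronger statement that $|X-\bar x_n|/s_n\to 0$ in probability, from which \eqref{eq4} follows at once since $p_n=\p\{|X-\bar x_n|/s_n>k\}$ and $k>0$ is fixed. The mechanism is that, for $\alpha\in(0,2)$, the empirical scale $s_n$ diverges and does so \emph{strictly faster} than the numerator: the sample second moment $\tfrac1n\sum X_j^2$ is governed by a one-sided stable law of index $\alpha/2<1$ (an infinite-mean regime), whereas $X$ itself stays $O_p(1)$ and $\bar x_n$ stays of smaller order than $s_n$.

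First I would gather the facts I need from the theory of stable domains of attraction. Since $X$ is attracted to an $\alpha$-stable law with $\alpha<2$, the tail $\p\{|X|>t\}$ is regularly varying with index $-\alpha$; hence $\p\{X^2>t\}=\p\{|X|>\sqrt t\}$ is regularly varying with index $-\alpha/2\in(-1,0)$, so $X^2$ lies in the domain of attraction of a one-sided stable law of index $\alpha/2\in(0,1)$. Thus there are norming constants $b_n$ with $T_n/b_n\Rightarrow Y$, $T_n:=\sum_{j=1}^{n}X_j^2$, where $\p\{0<Y<\infty\}=1$. Writing $a_n$ for the norming constants of $S_n:=\sum_{j=1}^n X_j$, a comparison of the two regularly varying tails yields $b_n\sim\lambda^2 a_n^2$ for some constant $\lambda>0$; consequently $a_n^2/(nb_n)\to0$ and $a_n/\sqrt{nb_n}\to0$, and — invoking the stable central limit theorem for $S_n$ (or the strong law when $\alpha>1$) — $\bar x_n=o_p(\sqrt{b_n/n})$ and $\bar x_n^2=o_p(b_n/n)$ in all cases. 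Finally $b_n/n\to\infty$: otherwise $T_n/n=(T_n/b_n)(b_n/n)$ would be tight along a subsequence, contradicting $T_n/n\to\infty$ a.s.\ (which holds because $\E X^2=\infty$).

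With this in hand the estimate is short. From $s_n^2=\tfrac1n T_n-\bar x_n^2$, multiplying by $n/b_n$ gives $T_n/b_n\Rightarrow Y$ in the first term and $\bar x_n^2\,n/b_n=o_p(1)$ in the second, so $s_n^2\,n/b_n\Rightarrow Y$ with $Y$ a.s.\ finite and positive; hence $\sqrt{b_n/n}\,/\,s_n=O_p(1)$ and $s_n\xrightarrow{\p}\infty$. For the numerator, $|X-\bar x_n|\le|X|+|\bar x_n|$ with $|X|=O_p(1)$ (as $X$ has a fixed distribution) and $\bar x_n=o_p(\sqrt{b_n/n})$, while $\sqrt{b_n/n}\to\infty$, so $|X-\bar x_n|/\sqrt{b_n/n}\xrightarrow{\p}0$. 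Combining the two,
\[
\frac{|X-\bar x_n|}{s_n}=\frac{|X-\bar x_n|}{\sqrt{b_n/n}}\cdot\frac{\sqrt{b_n/n}}{s_n}=o_p(1)\cdot O_p(1)\xrightarrow[n\to\infty]{\p}0,
\]
and therefore $p_n\to0$.

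The step I expect to be the main obstacle is the collection of facts in the second paragraph: one must pin down that $\alpha<2$ is exactly what forces $X^2$ into a domain of attraction with stability index below $1$ (the source of the blow-up of $s_n$), and one must check that the slowly varying factors hidden inside $a_n$ and $b_n$ cancel enough for the centering term $\bar x_n^2$ in $s_n^2$, and the term $\bar x_n$ in the numerator, to be genuinely negligible — this requires care when $\alpha\le1$, where $\bar x_n$ does not converge (and picks up a logarithmic centering at $\alpha=1$). If one prefers to bypass the bookkeeping with norming constants, an alternative (for $X$ independent of the sample, with distribution function $F$) is to condition on $X_1,\dots,X_n$: then $p_n=\E\bigl[\,\p\{X>\bar x_n+ks_n\}+\p\{X<\bar x_n-ks_n\}\,\bigr]$, and since $s_n\xrightarrow{\p}\infty$ and $\bar x_n/s_n\xrightarrow{\p}0$ we get $\bar x_n+ks_n\to+\infty$ and $\bar x_n-ks_n\to-\infty$ in probability, so the integrand tends to $0$ while staying $\le1$, and bounded convergence gives \eqref{eq4}.
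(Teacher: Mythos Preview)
Your argument is correct and rests on the same engine as the paper's: since $\alpha<2$, the squares $X_j^2$ lie in the domain of attraction of a one-sided stable law of index $\alpha/2<1$, which forces $s_n\to\infty$ in probability at a rate that swamps the numerator $|X-\bar x_n|$.

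The execution differs. The paper splits into three cases according to $\alpha$ and uses crude asymptotic forms ($\bar x_n\to a$ for $\alpha>1$; $\bar x_n\sim n^{1/\alpha-1}Y$ and $s_n^2\sim n^{2/\alpha-1}Z$ for $\alpha<1$; the $\alpha=1$ case is only mentioned). Your route is more uniform: you introduce the norming sequences $a_n$ and $b_n$, exploit the relation $b_n\sim\lambda^2 a_n^2$ to show $\bar x_n^2=o_p(b_n/n)$ in all cases at once, and deduce the stronger conclusion $|X-\bar x_n|/s_n\xrightarrow{\p}0$. This buys you a single argument with no case analysis and a cleaner treatment of the $\bar x_n^2$ term in $s_n^2$, which the paper handles only heuristically in case~2). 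Your alternative via conditioning and bounded convergence is also valid and in fact closer in spirit to the paper's case~1): once one knows $s_n\xrightarrow{\p}\infty$ and $\bar x_n/s_n\xrightarrow{\p}0$, the conclusion is immediate from the tail of $F$.
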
 
	
\begin{proof}
Since $X_j,\; j=1, \ldots ,n$ belong to the domain of attraction of strictly stable random variable with index $\alpha <2$, it is also true that $X_1^2, \ldots , X_n^2$ belong to the domain of attraction of one-sided stable distribution with index $\alpha /2$. 
		
1) Consider at first the case $1<\alpha <2$. In this case, $\bar{x}_n \stackrel[n \to \infty]{}{\longrightarrow }a=\E X_1$ and
$s_n \stackrel[n \to \infty]{}{\longrightarrow }\infty$. We have
\[ \p\{|X_1 - \bar{x}_n| >k s_n\} = \p\{X_1> ks_n+\bar{x}_n\}+\p\{X_1< -ks_n+\bar{x}_n\} = \]
\[=\p\{X_1> ks_n+a+o(1)\}+\p\{X_1< -ks_n+a +o(1)\}  \stackrel[n \to \infty]{}{\longrightarrow } 0. \]
		
2) Suppose now that $0<\alpha < 1$. In this case, we have $\bar{x}_n \sim n^{1/\alpha -1}Y$ as $n \to \infty$. Here $Y$ is $\alpha$-stable random variable, and the sign $\sim$ is used for asymptotic equivalence. Similarly, 
\[ s^2_n =\frac{1}{n}\sum_{j=1}^{n}X_j^2 - \bar{x}^2_n \sim n^{2/\alpha -1}Z (1+o(1)), \]
where $Z$ has one-sided positive stable distribution with index $\alpha /2$. We have
\[ \p\{|X_1-\bar{x}_n|>k s_n\}=\p\{ (X_1-\bar{x}_n)^2>k s^2_n\}=\]
\[=\p\{X_1^2 > n^{2/\alpha -1}Z (1+o(1))\}  \stackrel[n \to \infty]{}{\longrightarrow } 0. \] 
		
3) In the case $\alpha =1$ we deal with Cauchy distribution. The proof for this case is very similar to that in the case 2). We omit the details.
\end{proof} 
	
From this Theorem it follows that (for sufficiently large $n$) many heavy-tailed distributions will not produce any outliers of the first kind. Moreover, now we see the the presence of outliers of the first kind is in contradiction with many models having heavy tailed distributions, particularly, with models involving stable distributions. By the way, word {\bf variability} is not defined precisely, too. It shows, that high variability may denote something different than high standard deviation. 

Theorem \ref{th1} shows that for many situations distributions with infinite variance do not produce outliers for sufficiently large values of the sample size $n$. It means, we may restrict ourselves with the case of distributions having finite second moment. Therefore, instead of (\ref{eq3}), it is better to consider corresponding characteristic of general distribution

\begin{equation}\label{eq5}
 p(\kappa;X)=\p\{|X-\E X |/\sigma_X > \kappa\},
\end{equation}
where $\E X$ is expectation of the random variable $X$ and $\sigma_X$ is its standard deviation. {\textit {\textbf {We shall say that the expression $p(\kappa;X)$ gives the probability to have outliers on the level $\kappa$}}}.

It is clear that if random variable $X$ has finite second moment then $p_n$ from  (\ref{eq3}) converges to $p(\kappa;X)$ defined by (\ref{eq5}). To see this it is sufficient to apply the law of large numbers to the sequences $X_j$ and $(X_j-\E X_j)^2$, $j = 1, 2, \ldots$.

Let us show that the presence of outliers is more likely for the distributions having compact support than for the case of non-compact one.
\begin{thm}\label{th2}
Let $X$ be a symmetric with respect to zero random variable. Suppose that $X$ has finite second moment and let $\kappa>1$ be a fixed number. Then there exists a random variable $X^*$ having compact support and such that
\[ p(\kappa;X)\leq p(\kappa;X^*). \]
\end{thm}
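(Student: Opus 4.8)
The plan is to bypass $X$ almost entirely and produce an explicit three-point comparison variable. Since $X$ is symmetric about zero with finite second moment, $\E X=0$, hence $p(\kappa;X)=\p\{|X|>\kappa\sigma_X\}$; I shall assume $\sigma_X>0$, the degenerate case being trivial. The first step is to upgrade Chebyshev's inequality to a \emph{strict} bound. Put $a=\kappa\sigma_X$ and write $\sigma_X^2=\E[X^2;\,|X|\le a]+\E[X^2;\,|X|>a]\ge \E[X^2;\,|X|>a]$; on the event $\{|X|>a\}$ one has $X^2>a^2$ strictly, so $\E[X^2;\,|X|>a]\ge a^2\,\p\{|X|>a\}$ with this inequality strict unless $\p\{|X|>a\}=0$. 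In either case one obtains
\begin{equation*}
p(\kappa;X)\;<\;\frac1{\kappa^2}.
\end{equation*}

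Next I would construct $X^*$. Strictness makes the interval $\bigl(\tfrac12\,p(\kappa;X),\ \tfrac1{2\kappa^2}\bigr)$ nonempty, so fix any $\theta$ in it: then $0<\theta<\tfrac1{2\kappa^2}$ and $2\theta>p(\kappa;X)$. Let $X^*$ have the symmetric law $\p\{X^*=1\}=\p\{X^*=-1\}=\theta$, $\p\{X^*=0\}=1-2\theta$ (a legitimate law since $\kappa>1$ forces $1-2\theta>0$); it is compactly supported on $\{-1,0,1\}$. A direct computation gives $\E X^*=0$ and $\sigma_{X^*}^2=2\theta$, so $\kappa\sigma_{X^*}=\kappa\sqrt{2\theta}$ lies in $(0,1)$ precisely because $\theta<1/(2\kappa^2)$.

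Finally, $|X^*|$ takes only the values $0$ and $1$, and the threshold $\kappa\sigma_{X^*}$ sits strictly between them, so $\{|X^*|>\kappa\sigma_{X^*}\}=\{|X^*|=1\}$ and hence $p(\kappa;X^*)=2\theta$; by the choice of $\theta$ this exceeds $p(\kappa;X)$, which yields (more than) the asserted inequality $p(\kappa;X)\le p(\kappa;X^*)$. I do not anticipate a genuine obstacle here: the one place deserving care is the strict form of Chebyshev's inequality, which is precisely what makes the set of admissible $\theta$ a nonempty open interval --- with the plain bound $p(\kappa;X)\le\kappa^{-2}$ alone, the extremal case $p(\kappa;X)=\kappa^{-2}$ would remain, and that case must be (and, as shown, is) impossible for the argument to close.
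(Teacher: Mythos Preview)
Your argument is correct. The strict form of Chebyshev's inequality is exactly the right tool: on $\{|X|>\kappa\sigma_X\}$ one has $X^2>\kappa^2\sigma_X^2$ strictly, so whenever that event has positive probability the inequality $\E X^2\ge\E[X^2;\,|X|>\kappa\sigma_X]>\kappa^2\sigma_X^2\,\p\{|X|>\kappa\sigma_X\}$ is genuinely strict, giving $p(\kappa;X)<1/\kappa^2$ and hence a nonempty window for $\theta$. The three-point variable then does the job cleanly.

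Your route, however, is quite different from the paper's. The paper keeps $X$ in play: it truncates $X$ at a large level $A>\kappa\sigma_X$, setting $X^*=X$ on $\{|X|<A\}$ and $X^*=0$ otherwise, and then argues that the truncation lowers the variance (so the threshold $\kappa\sigma_{X^*}$ drops) while the tail mass beyond $\kappa\sigma_X$ is essentially preserved. The upshot is a compactly supported variable that is a small perturbation of $X$ itself. Your construction instead ignores $X$ entirely and jumps straight to the extremal three-point family that the paper introduces later in Example~\ref{exm1}; the only information about $X$ you use is the single number $p(\kappa;X)$ together with the universal bound $p(\kappa;X)<1/\kappa^2$. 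What you gain is brevity and robustness: there is no delicate bookkeeping of how truncation shifts both the variance and the tail probability simultaneously. What the paper's approach offers (when the accounting is done carefully) is a conceptually sharper message, namely that one can increase the outlier probability by \emph{cutting the tail of the very distribution at hand}, which is the point the authors want to drive home about heavy tails.
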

\begin{proof} If $X$ has a compact support then there is nothing to proof. Therefore, we have to consider the case when $X$ has non-compact support.
Choose positive number $A$ such that $A> \kappa \sigma_X$ and define
\[X^* = \begin{cases}
X \quad \text{for the case} \quad |X| <A, \\
0 \quad \text{for the case} \quad |X| \geq A.
\end{cases}\]
It is clear that
\[\sigma_X^2= \E X^2 = 2\Bigl( \int_{0}^{A} x^2 dF_X(x) + \int_{A}^{\infty} x^2 dF_X(x)\Bigr)      \]
and 
\[(\sigma_X^*)^2= \E (X^*)^2 = 2 \int_{0}^{A} x^2 dF_X(x) < \sigma_X^2.\]
Denote
\[q = \p\{|X| \geq A\}.\]
In view of non-compactness of $X$'s support we have $q>0$. Obviously, $\p\{X^* =0\} \geq q$.
Basing on all above we obtain
\[\p\{X \geq \sigma_X \kappa\} =\p\{\kappa \sigma_X \geq X < A\} + \p\{X \geq A\} = q/2 + \p\{\kappa \sigma_X \geq X^* < A\}\} -q = \]
\[= \p\{X^* \geq \kappa \sigma_{X}\}-q/2 \geq \p\{X^* \geq \kappa \sigma_{X^*}\}.\]
It just means that the probability to have outliers is greater for $X^*$ than for $X$:
\[ \p\{X^* \geq \kappa \sigma_{X^*}\} \geq \p\{X \geq \kappa \sigma_{X}\}.\]
\end{proof}
Now we see, that the distributions with compact support may have higher probability of outliers on the same level than the distributions with non-compact support. {\textit {\textbf {Therefore, the idea on connection of the presence of outliers with the heaviness of distributional tails is wrong}}}.

Let us now consider symmetric distribution with a compact support. The most interesting problem now is to describe the distributions having maximal possible probability of outliers. It appears that the problem had been considered in the literature (see, for example, \cite{KS}). Let us formulate  corresponding results below in the form suitable for us.

Let $X$ be a random variable whose distribution function has a compact support. Because the probability 
\[p(\kappa;X)=\p\{|X-\E X |/\sigma_X \geq \kappa\} \]
is location and scale invariant we can suppose that $\E X =0$ and $\p\{|X|\leq 1\}=1$. 

\begin{ex}\label{exm1}
Consider random variable $X$ taking three values: $-1,0,1$ with probabilities $p/2,1-p,p/2$, correspondingly. It is easy to see that $\E X =0$ and $\sigma_{X}=\sqrt{p}$. Therefore, $p(\kappa;X)=p$ for any $\kappa \in (1,1/\sqrt{p}]$.
\end{ex}
Particularly, for $\kappa =1/\sqrt{p}$ we obtain the greater value of outlier which may appears with probability $p$. Opposite, if $\kappa>1$ is fixed then we can define $p=1/\kappa^2$ and the random variable from Example \ref{exm1} will posses outliers of level $\kappa$ with probability $p$. From Selberg inequality (see, for example, \cite{KS}) it follows that random variable with outliers of the level $\geq \kappa$ appearing with the probability $\geq 1/\kappa^2$ coincides with that given in Example \ref{exm1} up to location and scale parameters. In other words, {\it the random variable with corresponding outlier properties is essentially unique}.
 
However, it seems that random variables similar to that from Example \ref{exm1} do appear in applied very rarely. Are there more practical examples with ``large enough" number of outliers? Corresponding example is based on Gauss inequality (see \cite{KS}).

\begin{ex}\label{exm2}
Consider random variable $X$ which takes zero value with probability $1-p$ ($p\in (0,1)$), and coinsides with uniformly distributed over $[-1,1]$ random variable $U$ with probability $p$. It is clear that $\E X =0$, $\sigma_{X}=\sqrt{p/3}$. Therefore, 
\[ \p\{|X|\geq \kappa \sigma_{X}\} =p \bigl(  1-\kappa \sqrt{p/3} \bigr). \]
Let us maximize this expression with respect to $p\in [0,1]$ assuming that $\kappa > 2/\sqrt{3}$. We obtain
\[ \max_{p \in [0,1]} (\p\{|X|\geq \kappa \sigma_{X}\} = \frac{4}{9 \kappa^2} \]
which attains for $p=4/(3 \kappa^2)$.
\end{ex}
From Gauss inequality (see \cite{KS}) it follows that this is upper bound in the class of unimodal distributions with finite variance. The extremal distribution constructed in Example \ref{exm2} is unique up to location and scale parameters.

Of course, the boundary for probability to have outliers on the level $\kappa$ is smaller in $9/4$ times for Example \ref{exm2} than that for Example \ref{exm1}. However, this probability is not too small comparing, say with Gaussian distribution. 

The distribution from Example \ref{exm2} looks also not too ``practical" in view of presence of an ``essential" mass at zero. However, it is clear that we may replace this mass by a pick of a density near origin. Of course, the probability of the presence of corresponding outliers will become smaller a little bit, but not too essential. Let us give some simulation results for such case. 

We simulated a sample of the volume $n=500$ from a mixture of two uniform distributions. First component of the mixture was following uniform law on interval $[-0.1,0.1]$. This component had weight $71/75$. The second component was following uniform distribution on interval $[-1,1]$. It had weight $4/75$. The sample points are shown on the Figure \ref{fig1} as blue points. Vertical lines are situated at positions $-5 \sigma$ and $5 \sigma$. We see some elements of the sample (at least 6 of them, that is 1.2\%) are outside the interval $[-5\sigma, 5\sigma]$.

\begin{figure}[h]
\centering
\hfil
\includegraphics[scale=0.8]{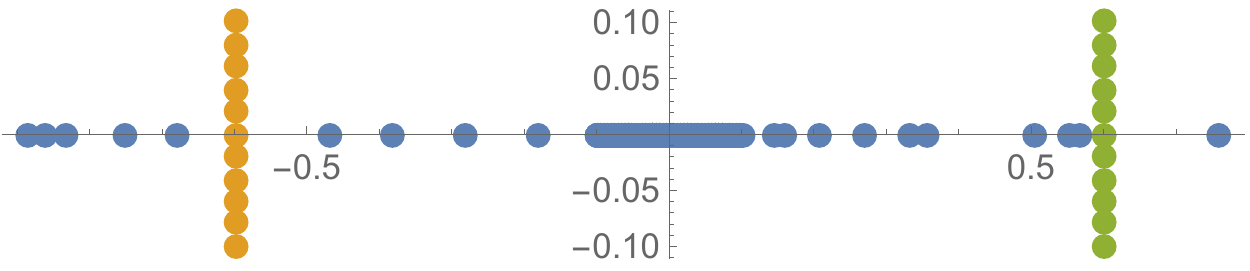}
\caption{{\sl A sample of 500 points from a mixture of two uniform distributions. Vertical lines are situated at positions $-5 \sigma$ and $5 \sigma$.}}\label{fig1}
\end{figure}

It is clear, that we observe some outliers for smooth enough distribution having a compact support. To understand what is ``typical form" of a distribution with not small probability of outliers of the level $\kappa$ let us consider another simulation.

We simulate $n=5000$ observations from mixture of two normal distributions. First component of the mixture is following standard normal distribution with the weight $p=4/75$. The second component is following normal distribution with zero mean and $\sigma=0.1$. The results are given on the Figure \ref{fig2}. We see that here is present rather large set of outliers on the level $\kappa =5$. Although the distribution has non-compact support its tail is not heavy. 

\begin{figure}[h]
\centering
\hfil
\includegraphics[scale=0.8]{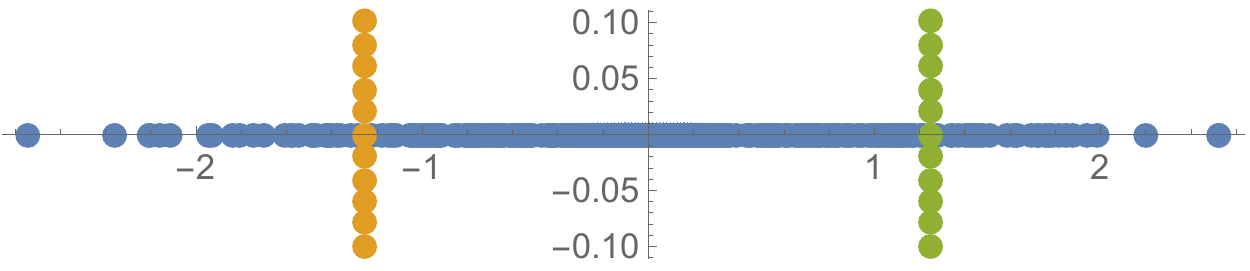}
\caption{{\sl A sample of 5000 points from a mixture of two normal distributions. Vertical lines are situated at positions $-5 \sigma$ and $5 \sigma$.}}\label{fig2}
\end{figure}

Let us look at corresponding histogram in different scale. Namely, we multiplied all data values by $20$. The resulting histogram is given on the Figure \ref{fig3}.

 \begin{figure}[h]
 \centering
 \hfil
 \includegraphics[scale=0.8]{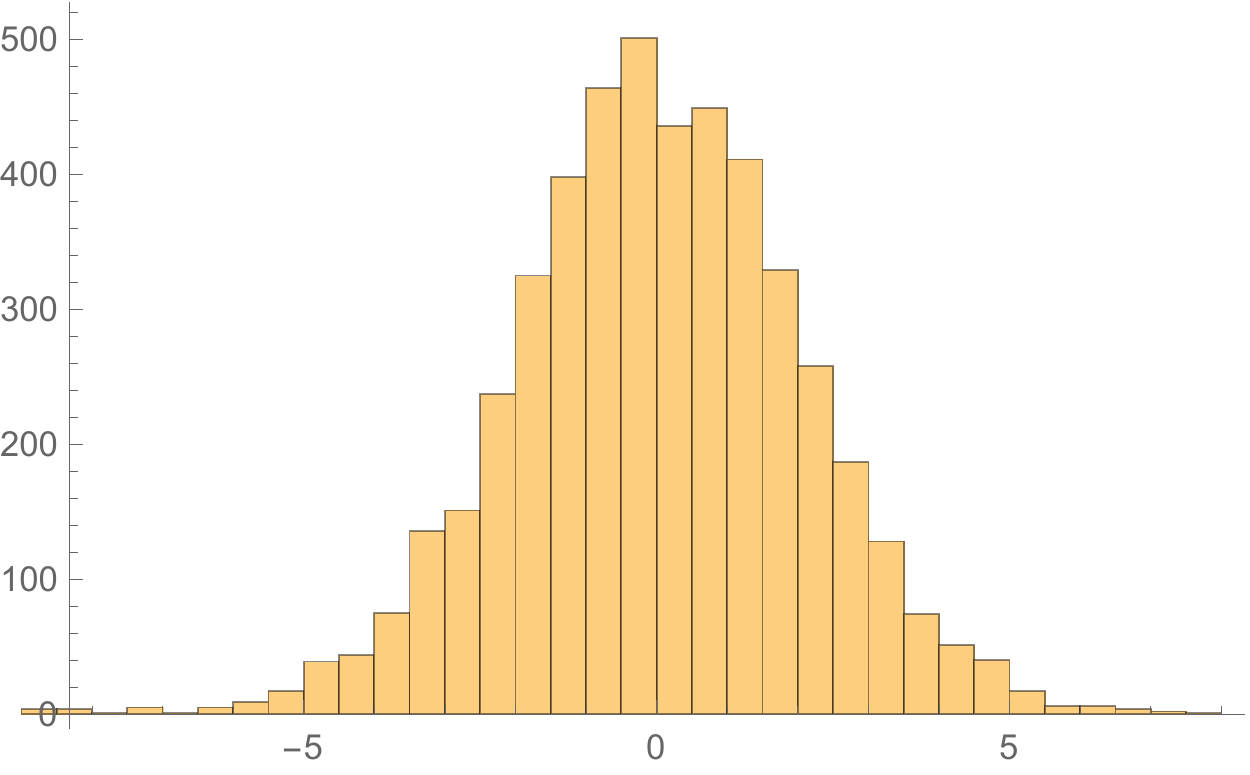}
 \caption{{\sl Histogram of sample of 5000 points (multiplied by 20) from a mixture of two normal distributions.}}\label{fig3}
 \end{figure}

The histogram looks as if the tails of the distribution were heavy. However, they are thin.

Consider one more situation. Let $X$ be a random variable having standard Gaussian distribution, and $A$ be a random variable with the density function 
\[ q(a;\alpha) = \frac{\alpha}{(a+1)^{\alpha+1}},\]
where $a\in [0,\infty)$ and $\alpha >2$ be a parameter. Consider the product $Y=X/A$. Its cumulative distribution function has form 
\[  G(x; \alpha) = \int_{0}^{\infty}\Phi (ax)q(a;\alpha) da. \]
It is clear that with growing $\alpha$ the tails of random variable $A$ become more thin. This implies that the tails of $Y$ become more heavy.
Computer calculations show that the probability of the event $\{ Y\geq \kappa \sigma(\alpha) \}$ for $\kappa =5$ has the form (as a function of $\alpha$)

 \begin{figure}[h]
 \centering
 \hfil
 \includegraphics[scale=0.8]{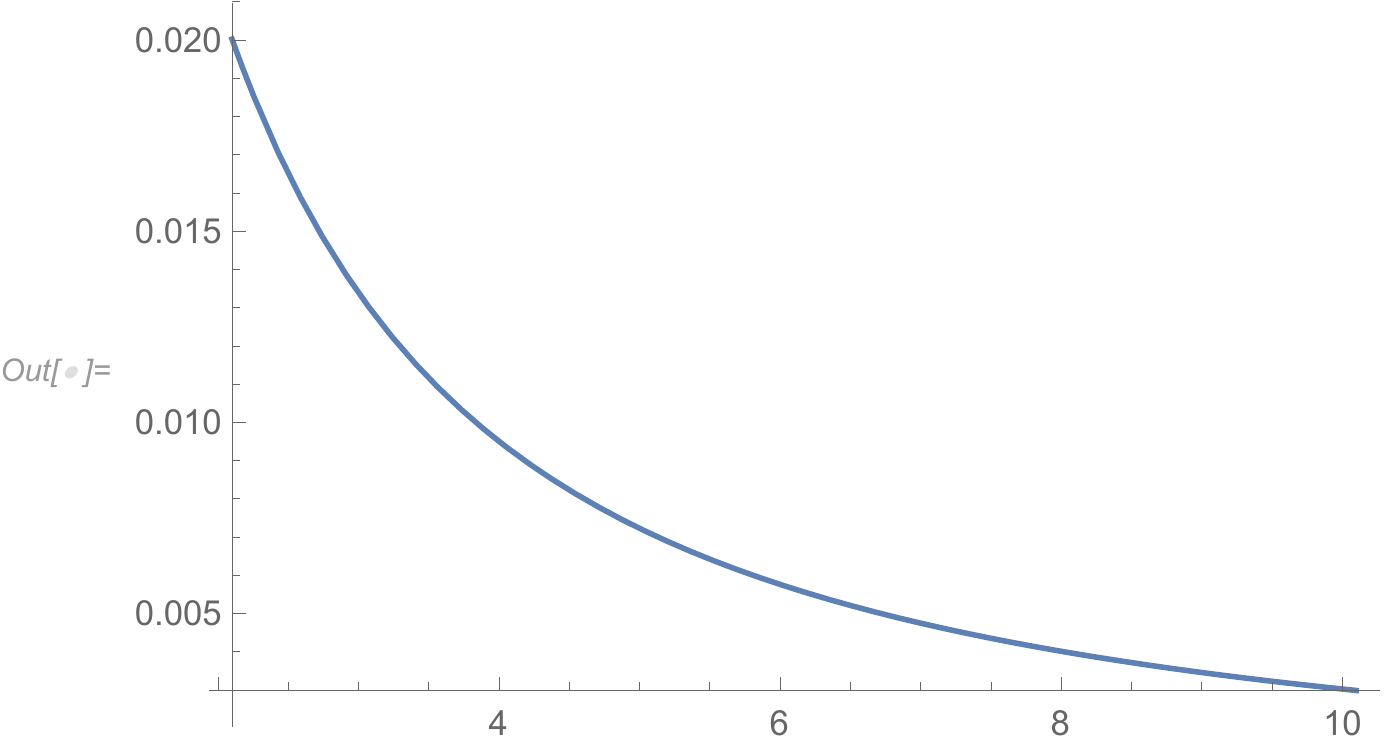}
 \caption{{\sl Probability of the event $\{ Y\geq 5 \sigma\}$ as a function of $\alpha \in (2,10]$ for the scale mixture of Gaussian distribution by means of random variable $A$ having Pareto distribution.}}\label{fig4}
 \end{figure}
From the Figure \ref{fig4} we see that the probability of outliers on the level $\kappa =5$  decreases with increasing $\alpha$, that is while the tails become more heavy.

Now we can resume that the opinion on the direct dependence between probability of outliers and heaviness of the tails is wrong and, probably, has to be changed by opposite, that is on inverse dependence. 

\section{Ostensible heavy tails}\label{s3}
\setcounter{equation}{0}

Let us try to understand what structure of distribution leads to high probability of outliers and why the general opinion consists in direct dependence between this probability and heaviness of the tails. Unfortunately, we cannot provide mathematically precise results in this connection, but will give intuitive clear explanation basing on the results obtained above.

The examples given above lead to the following. The distribution with high probability of outliers has the properties:
\begin{enumerate}
{\it \item[1.] High pike near mean value.
\item[2.] Thin or truncated ``far" tail\footnote{that is the part of distribution adjoing to infinity}.
\item[3.] The part of distributions body outside of the pike changes not too essentially with the remotion from the pike on a distance, but turns into thin tail after that. }
\end{enumerate}

In this situation, the main part of observations on such population will concentrate around pike. In view of this the empirical variance will not be large. According to the part outside of pike some observations will belong to this region, that is ``rather far" from the pike and, therefore, ``far" from mean value. This part of observations represents outliers. Obviously, thin or truncated tail plays no role, and we have rather high relative number of outliers.

In contrary, if the distribution has heavy tails (so that the variance is infinite) the observations belonging to ``far" tail make the variance large. It follows from the proof of Theorem \ref{th1} this grows is such quick that the probability of outliers on any positive level tends to zero as the number of observations tends to infinity. Such behavior is impossible for any finite distribution if the level of outliers is less than the distance from the border of support divided by the standard deviation of the distribution. So, intuitively, the presence of heavy tails is in contradiction with the presence of large level outliers.

In the situation when the general distribution possesses the properties $1. - 3.$ we say that is has {\textit {\textbf {ostensible heavy tails}}}.

But why the general opinion consists in direct dependence between this probability and heaviness of the tails? There are at least three reasons for that.
\begin{enumerate}
\item[\it A.] As it was mentioned in property $3.$ above, the part of distribution body lying outside of pike is not too thin while far enough from the pike. This seemingly is an effect of the heavy tail because some observations seems to be far from the main part of other observations. This is especially so if statistician does not compare the distance from other observation with the standard deviation.
\item[\it B.] Densities of symmetric stable distributions have rather high pike at the center of symmetry. Therefore, many observations are concentrated near the pike. If the sample size is not too large, the effect of heaviness of the tails is not too essential as if the statistician had sample from truncated distribution. So, it seems that there are some outliers. With growing sample size there will appear more and more observations from the tails making empirical standard higher and higher. This process leads to ``loss of outliers" and, in limit, results in zero probability of outliers presence. Now, we may conclude, that not large enough sample size may result in seemingly\footnote{``Seemingly" because we cannot pass to limit and are far from it in view of insufficiently sample size} outliers.
\item[\it C.] Although, some statisticians do not give precise definition of outliers they have in mind something different from that we discussed above. However, our point is that empirical understanding is not sufficient for solution any essential problem. Corresponding notions have to be formulated in precise terms. Some attempts to give different definitions of outliers were made in \cite{KAKK} and in \cite{KKK}.
\end{enumerate}

\section{Outliers and robust estimation}\label{s4} 
\setcounter{equation}{0} 

In introductions to many books on robust statistics there is written that one of the aims of robustness is to defend estimator from large outliers (see, for example, \cite{HRRS}). However, it appears that there are almost no conclusions on such defendence really given. Nobody shows the robust procedures allow to eliminate the influence of outliers. Just in opposite, in many publications it has been shown robust procedures defend the estimator from the presence of heavy tails. As it was indicated above, there is no direct connections between ouliers and heaviness of tails.

Let us consider the use of robust estimators from 
formally mathematical point of view. Let us discuss the problem on robust estimation of location parameter, or, more precisely, of symmetry center. Formally, the quality of robust estimators is guaranteed only asymptotically, that is for large sample size $n$. However, the presence of outliers asymptotically means that the tails are not heavy (see Theorem \ref{th1}) that the variance is finite. Therefore, the law of large numbers is valued and the sample mean is asymptotically consistent. This means we do not need any robust estimator. 

Of course, the statement above holds asymptotically only. And it is not clear how big should be $n$. However, it is not clear (in precise terms) for which $n$ robust estimator is good enough. Clearly, Example \ref{exm1} shows that the outliers may essentially change the quality of sample mean for the case of not large sample size. This means, we need to have some estimators with nice non-asymptotical properties. Our point of view is that there is needed a theory of robust non-asymptotically oriented estimators. Some initial aspects of such theory are given in \cite{KRF}. However, some more study is needed. We are planning to consider corresponding result in another paper. Let us note that classical approach to robust estimators is intuitively oriented on other definition of outliers. Probably, the definition given in \cite{KAKK} and \cite{KKK} more corresponds to classical theory of robust estimation. 

\section*{Acknowledgment}
The work was partially supported by Grant GA\v{C}R 16-03708S.


\begin{thebibliography}{99}
	
\bibitem{BMW}
Szymon Borak, Adam Misiorek, Rafal Weron (2010).
\newblock Models for Heavy-tailed  Asset Returns,
\newblock SFB 649 Discussion Paper 2010-049, http://sfb649.wiwi.hu-berlin.de
\newblock ISSN 1860-5664 SFB 649, 
\newblock Humboldt-Universit\"{a}t zu Berlin, Spandauer Stra{\ss}e 1, D-10178 Berlin, 1--40.

\bibitem{Dav}
H.A. David, H.N. Nagaraja (2003).
\newblock Order Statistics,
\newblock John Wiley \& Sons.

\bibitem{EK}
Erns Eberlein and Ulrich Keller (1995).
\newblock Hyperbolic Distributions in Finance,
\newblock Institut f\"{u}r Mathematische Stochastik, Universit\"{a}t Freiburg, 1--24.

\bibitem{HRRS}
Frank R. Hampel, Elvezio M, Ronchetti, Peter J. Rousseeuw and Werner A. Stahel (1986).
\newblock Robust Statistics, The Approach Based on Influence Functions,
\newblock John Wiley \& Sons, New York, Chichester, Brisbane, Toronto, Singapore. 

\bibitem{Haw}
D.M. Hawkins (1980).
\newblock Identification of outliers.
\newblock SPRINGER-SCIENCE+BUSINESS MEDIA, B.V.

\bibitem{KS}
Samuel Karlin and William Studden (1966).
\newblock Tchebycheff Systems: With Applications in Analysis and Statistics, 
\newblock Interscience Publishers.

\bibitem{KRF}
Lev B. Klebanov, Svetlozar T. Rachev, Frank J. Fabozzi (2009)
\newblock Robust and Non-Robust Models in Statistics,
\newblock Nova Science Publishers, Inc., New York.

\bibitem{KV}
Lev B. Klebanov, Irina Volchenkova (2015).
\newblock Heavy Tailed Distributions in Finance: Reality or Myth? Amateurs Viewpoint.
\newblock arXiv 1507.07735v1, 1--17.
		
\bibitem{KlF}
Lev B. Klebanov (2016).
\newblock No Stable Distributions in Finance, please!
\newblock arXiv 1601.00566v2, 1--9.

\bibitem{Kl16}
Lev B. Klebanov (2016).
\newblock Big Outliers Versus Heavy Tails: what to use?
\newblock arXiv 1611.05410v1, 1--14.

\bibitem{KTK}
Lev B Klebanov, Gregory Temnov, Ashot V. Kakosyan (2016).
\newblock Some Contra-Arguments for the Use of Stable Distributions in Financial Modeling,
\newblock arXiv 1602.00256v1, 1--9.

\bibitem{KAKK}
Lev B. Klebanov, Jaromir Antoch,
Andrea Karlova and Ashot V. Kakosyan (2017)
\newblock Outliers and related problems,
\newblock arXiv 1701.06642v1, 1--16.

\bibitem{KKK}
Lev B. Klebanov, Ashot V. Kakosyan, and Andrea Karlova (2016).
\newblock Outliers, the Law of Large Numbers, Index of Stability and Heavy Tails,
\newblock arXiv 1612.09265v1, 1--5.
		
\bibitem{Sen}
Eugene Seneta (1976).
\newblock Regularly Varying Functions,
\newblock Springer, Berlin - Heidelberg.
		
\end{thebibliography}
\end{document}